\newtheorem{theorem}{Theorem}
\newtheorem{lemma}{Lemma}
\newtheorem{remark}{Remark}
\def\XXint#1#2#3{{\setbox0=\hbox{$#1{#2#3}{\int}$ }
\vcenter{\hbox{$#2#3$ }}\kern-.6\wd0}}
\newtheorem{defn}{Definition}
\newcommand{\del}{\partial}
\newcommand{\dbar}{\bar\partial}
\newcommand{\eps}{\varepsilon}
\newcommand{\ddbar}{\sqrt{-1}\partial\bar\partial}
\newcommand{\lapl}{\Delta}
\DeclareMathOperator{\ric}{Ric}
\DeclareMathOperator{\wg}{\wedge}
\DeclareMathOperator{\im}{\sqrt{-1}}
\DeclareMathOperator{\re}{\text{Re}}
\DeclareMathOperator{\tr}{tr}
\numberwithin{equation}{section}
\title{On the Hessian-cscK equations}
\author{Bin Guo, Kevin Smith and Freid Tong}
\date{}
\begin{document}

\maketitle
\begin{abstract}
In this paper, we propose a coupled system of complex Hessian equations which generalizes the equation for constant scalar curvature K\"ahler (cscK) metrics. We show this system can be realized variationally as the Euler-Lagrange equation of a Hessian version of the Mabuchi K-energy in an infinite dimensional space of $k$-Hessian potentials, which can be seen as an infinite dimensional Riemannian manifold with negative sectional curvature. Finally, we prove an a priori $C^0$-estimate for this system which depends on  the Entropy, which generalizes a fundamental result of Chen and Cheng \cite{CC} for cscK metrics. 
\end{abstract}

\section{Introduction}

There has been increasing interest in recent years in two directions in the theory of geometric partial differential equations: on one hand, on systems consisting of a non-linear equation coupled with its linearization, of which the constant scalar curvature K\"ahler metric (cscK) is an example in complex geometry \cite{CC}, and the affine Plateau problem an example from real geometry \cite{TW}; on the other hand, consideration of other elliptic equations besides the most familiar examples of the Laplacian and the Monge-Amp\`ere equations (e.g. Harvey and Lawson on the Lagrangian equation \cite{HL}, Collins-Yau on the defomed HYM equation \cite{CY}, Phong-Picard-Zhang on the Fu-Yau equation \cite{PPZ}). In this paper, we consider a coupled system which stands at the crossroads of the above two broad lines of development, namely the coupled system of a complex Hessian equation with its linearization. We show that this system admits a natural interpretation in terms of a generalized notion of curvature, and that this notion of curvature admits, just as the standard notion, an interpretation in terms of Deligne pairings \cite{PS0}. We establish the $C^0$ estimate for this Hessian coupled system, and give an interpretation of this coupled system as a variational problem for an energy functional in an infinite dimensional Riemannian manifold of negative sectional curvature, generalizing the constructions of Donaldson \cite{D}, Mabuchi \cite{M}, and Semmes \cite{S}.

We now describe the coupled system we are interested in. Let $(X,\omega)$ be a compact K\"ahler manifold. We consider the following coupled system of equations for a pair of smooth functions $(\varphi, F)$:
\begin{equation}\label{eqn:c sigma}
\left\{\begin{aligned}
&(\omega+ \ddbar \varphi)^k \wedge \omega^{n-k} = e^F\omega^n,\quad \sup_X\varphi = 0\\
&\lapl_G F = - \overline{\alpha} + \tr_G \alpha,
\end{aligned}\right.
\end{equation}
where $G^{i\bar j} = k\frac{\im dz^i\wedge d\bar z^j\wedge\omega_\varphi^{k-1}\wg\omega^{n-k}}{\omega_\varphi^k\wg\omega^{n-k}}$ and $\lapl_G = G^{i\bar j}\del_i\del_{\bar j}$ is the linearized operator associated to the nonlinear operator $\varphi\mapsto \log \frac{\omega_{\varphi}^k\wg\omega^{n-k}}{\omega^n}$.  $\alpha$ is a smooth $(1,1)$-form and $\overline{\alpha} = \frac{1}{V} \int_X \alpha\wedge \omega_\varphi^{k-1}\wedge \omega^{n-k}$ is a constant making the second equation of \eqref{eqn:c sigma} comptible.
For this system to be elliptic (i.e. $G^{i\bar j}>0$), we require $\varphi$ to be {\em admissible}, which means it satisfies the condition
\[(\omega+ \ddbar \varphi)^j \wedge \omega^{n-j} > 0 \text{ for } j = 1,\ldots, k.\] 
Alternatively, $\varphi$ is admissible if and only if the eigenvalues of $\omega+\ddbar\varphi$ with respect to the K\"ahler metric $\omega$ is in the $\Gamma_k$-cone, where $\Gamma_k\subset \mathbb R^n$ is given by
\[\Gamma_k = \{\lambda\in \mathbb{R}^n: \sigma_1(\lambda)>0, \ldots, \sigma_k(\lambda)> 0\}\]
and $\sigma_k$ is the elementary symmetric polynomial of degree $k$ on $\mathbb{R}^n$. It is important to note that in general, the condition of being admissible for $k<n$ will depend on the background K\"ahler metric $\omega$, in particular, it may not be invariant under biholomorphic maps. 

We state some basic well-known properties of the cone $\Gamma_k$ which will be used later. For a more thorough description of the properties of $\Gamma_k$, we refer the readers to \cite{Wang}. 

\begin{lemma}\label{lem: Gamma_k-basics}
For $\lambda \in \Gamma_k$, we have
\begin{enumerate}
    \item $\sigma_{k-1, i}(\lambda)>0$ for any $1\leq i\leq n$, where $\sigma_{k-1, i}(\lambda)=\sigma_{k-1}(\lambda_1, \ldots, \lambda_{i-1}, \lambda_{i+1}, \ldots, \lambda_n)$. 
    \item (Garding's inequality) For any $\mu,\lambda\in \Gamma_k$, we have the inequality
    \[\sum_{i=1}^n \mu_i \frac{\partial \sigma_k(\lambda)}{\partial \lambda_i} \ge C(n,k) \sigma_k(\mu)^{\frac 1 k} \sigma_k(\lambda)^{\frac{k - 1}{k}}.\]
    for some explicit constant $C(n, k)$. 
\end{enumerate}
\end{lemma}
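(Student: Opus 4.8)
The plan is to reduce both parts to classical facts about the G\aa rding cone, supplementing part (1) with a Newton--Maclaurin inequality in order to promote a non-strict bound to a strict one. The basic tool in both cases is the cofactor identity $\sigma_k(\lambda)=\sigma_k(\lambda|i)+\lambda_i\,\sigma_{k-1}(\lambda|i)$, where $\lambda|i$ denotes $\lambda$ with its $i$-th entry deleted; this exhibits $\sigma_{k-1,i}(\lambda)=\partial\sigma_k/\partial\lambda_i$, so that (1) is precisely the statement that $\sigma_k$ is an elliptic operator on $\Gamma_k$, and (2) is G\aa rding's inequality. Both are well known (see \cite{Wang}); the point of the proof is to assemble the standard ingredients.

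For (1) I would argue by induction on $k$, with $k=1$ trivial. Fix $\lambda\in\Gamma_k$ and an index $i$. First I would show $\sigma_{k-1}(\lambda|i)\ge 0$: since $\sigma_j(e_i+\eps e)=\binom{n}{j}\eps^j+\binom{n-1}{j-1}\eps^{j-1}>0$ for every $\eps>0$ and every $j\le k$, the basis vector $e_i$ lies in $\overline{\Gamma_k}$, and because $\Gamma_k$ is an open convex cone one gets $\lambda+se_i\in\Gamma_k$ for all $s\ge 0$; since $\sigma_k$ is affine in $s$ along this ray, $\sigma_k(\lambda)+s\,\sigma_{k-1}(\lambda|i)=\sigma_k(\lambda+se_i)>0$ for all $s\ge 0$ forces $\sigma_{k-1}(\lambda|i)\ge 0$. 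Then I would rule out equality: if $\sigma_{k-1}(\lambda|i)=0$, the inductive hypothesis (applied with $l+1\le k-1$ in place of $k$, using $\Gamma_k\subset\Gamma_{l+1}$) gives $\sigma_l(\lambda|i)>0$ for $1\le l\le k-2$, so in particular $\sigma_{k-2}(\lambda|i)>0$ (trivially so if $k=2$, since $\sigma_0\equiv 1$); the Newton--Maclaurin inequality for the $n-1$ numbers $\lambda|i$ then yields $\sigma_{k-2}(\lambda|i)\,\sigma_k(\lambda|i)\le C\,\sigma_{k-1}(\lambda|i)^2=0$ with $C\ge 0$, whence $\sigma_k(\lambda|i)\le 0$, and the cofactor identity gives $\sigma_k(\lambda)=\sigma_k(\lambda|i)\le 0$, contradicting $\lambda\in\Gamma_k$. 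Thus $\sigma_{k-1,i}(\lambda)>0$. I expect the passage from $\ge 0$ to $>0$ to be the only genuinely delicate point, and the Newton--Maclaurin inequality is exactly what makes it go through.

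For (2) I would pass through the theory of hyperbolic polynomials. The polynomial $p(s):=\prod_{j=1}^n(s+\lambda_j)$ has only real roots, so by Rolle's theorem the same holds for $p^{(n-k)}$; since $\sigma_k(\lambda+te)=\tfrac{1}{(n-k)!}p^{(n-k)}(t)$, the polynomial $\sigma_k$ is hyperbolic with respect to $e=(1,\dots,1)$, and its cone of hyperbolicity contains (in fact equals) $\Gamma_k$, so $\mu,\lambda$ lie in it. Writing $M$ for the symmetric $k$-linear form polarizing $\sigma_k$, so that $\sigma_k(\lambda)=M(\lambda,\dots,\lambda)$ and $\partial\sigma_k/\partial\lambda_i=k\,M(e_i,\lambda,\dots,\lambda)$, G\aa rding's inequality gives $M(v^{(1)},\dots,v^{(k)})\ge\prod_{m=1}^k\sigma_k(v^{(m)})^{1/k}$ for $v^{(1)},\dots,v^{(k)}\in\Gamma_k$; specializing $v^{(1)}=\mu$, $v^{(2)}=\dots=v^{(k)}=\lambda$ and multiplying by $k$ yields $\sum_i\mu_i\,\partial\sigma_k(\lambda)/\partial\lambda_i=k\,M(\mu,\lambda,\dots,\lambda)\ge k\,\sigma_k(\mu)^{1/k}\sigma_k(\lambda)^{(k-1)/k}$, which is (2) with $C(n,k)=k$. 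The genuine content here is G\aa rding's inequality itself, which is proved classically by induction on the degree $k$ using the concavity of $\sigma_k^{1/k}$ on $\Gamma_k$; rather than reproduce it I would simply cite it (e.g.\ \cite{Wang} and the references therein), so in (2) the main obstacle amounts to invoking the correct classical theorem.
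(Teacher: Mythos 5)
Your proposal is correct, but note that the paper does not actually prove this lemma: it is quoted as a list of well-known properties of $\Gamma_k$ with a pointer to \cite{Wang}, so there is no internal argument to compare against, and what you have supplied is a self-contained version of the standard derivation. For part (2), your route through hyperbolic polynomials is exactly the classical one: the Rolle argument showing $t\mapsto\sigma_k(\lambda+te)$ has only real roots, the identification of the G\aa rding cone of $\sigma_k$ with $\Gamma_k$, and the specialization of the polarized inequality $M(v^{(1)},\ldots,v^{(k)})\ge\prod_m\sigma_k(v^{(m)})^{1/k}$ to $v^{(1)}=\mu$, $v^{(2)}=\cdots=v^{(k)}=\lambda$ all check out, and it yields the explicit constant $C(n,k)=k$, consistent with the statement. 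For part (1), your structure is also sound, and it is worth stressing that the strictness really does require an extra ingredient: applying the polarized inequality with $v^{(1)}=e_i\in\overline{\Gamma_k}$ only gives $\sigma_{k-1,i}(\lambda)=D_{e_i}\sigma_k(\lambda)\ge k\,\sigma_k(e_i)^{1/k}\sigma_k(\lambda)^{(k-1)/k}=0$ for $k\ge 2$, so your Newton--Maclaurin step is precisely what closes the gap; it is valid here because Newton's inequalities hold for arbitrary real tuples (not only nonnegative ones), which matters since $\lambda|i$ may have negative entries -- you should say this explicitly. Two points to tighten: the step $\lambda+se_i\in\Gamma_k$ for $s\ge 0$ relies on $\Gamma_k+\overline{\Gamma_k}\subset\Gamma_k$, i.e.\ on the convexity (or at least additivity) of $\Gamma_k$, which is itself a nontrivial classical fact that you should cite or note follows from the hyperbolicity framework you already set up in (2); and in the induction you should state clearly that the inductive hypothesis is statement (1) for all degrees $\le k-1$ on $\mathbb{R}^n$, which is how you invoke it via $\Gamma_k\subset\Gamma_{l+1}$. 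With those citations made precise, your argument is a complete and acceptable substitute for the reference to \cite{Wang}.
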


\section{Energy functionals}
A key aspect about the cscK equation is that the equation for a cscK metric can be realized as the Euler-Lagrange equation of a Mabuchi $K$-energy in the space of K\"ahler potentials. This is true for the system \eqref{eqn:c sigma} as well. In this section, we will introduce the analogue of the Mabuchi $K$-energy and show that it is critical points correspond precisely the solutions of \eqref{eqn:c sigma}. 

\begin{defn}
We define the (generalized) {\em Ricci curvature} of $\omega_\varphi$ (relative to $\omega$) as
$$\widehat{\ric}(\omega_\varphi) = - \ddbar \log \omega_\varphi^k\wedge \omega^{n-k} = -\ddbar f + \ric(\omega)$$
where $f = \log \frac{\omega_\varphi^k\wedge \omega^{n-k}}{\omega^n}$ and $\ric(\omega )$ is the usual Ricci curvature of $\omega$. 
\end{defn}
The generalized Ricci curvature of $\omega_\varphi$ lies in the first Chern class $c_1(X)$. In fact, the equation \eqref{eqn:c sigma} can be viewed as an equation for the generalized Ricci curvature $\widehat \ric(\omega_\varphi)$. Indeed \eqref{eqn:c sigma} is equivalent to the equation
\begin{equation}\label{eqn:main 1}
\tr_G( \widehat {\ric}(\omega_\varphi) - \ric(\omega) + \alpha  ) = \bar \alpha,
\end{equation}

Viewing $\tr_G \widehat {\ric}(\omega_\varphi)$ as the generalized Scalar curvature, we can define the analogue of the Mabuchi energy, whose critical points are precisely potentials of constant generalized Scalar curvature. We will write $SH_k(X,\omega)$ the set of functions $\varphi$ such that $\omega^{-1}\cdot \omega_\varphi\in \Gamma_k$ on $X$.

\begin{defn}
Given a constant $\lambda\in \mathbb R$, we define the Hessian Mabuchi energy $\mu_k$ by its variation: for a family of $\phi_t\in SH_k(X,\omega)$, we have
\begin{equation}\label{eqn:d mu}\frac{d}{dt} \mu_k(\phi_t) = - \frac{k}{V} \int_X \dot \phi_t (\widehat{\ric}(\omega_{\phi_t}) - \lambda \omega_{\phi_t}) \wedge\omega_{\phi_t}^{k-1}\wedge \omega^{n-k}.\end{equation}
Since we can always add a constant to $\mu_k$ without changing the variation, we will often also choose a normalization so that $\mu_k(0) = 0$. 
\end{defn}

It's clear from this definition that the critical points of $\mu_k$ is precisely the solution of equation~\ref{eqn:main 1} with $\alpha = \ric(\omega)$, which is exactly those $\omega_{\varphi}$ with constant generalized Scalar curvature. However, it is not clear from this definition that $\mu_k$ is well-defined. The following Theorem shows that $\mu_k(\cdot)$ is well-defined and gives and explicit formula for $\mu_k$. 
\begin{theorem}
$\mu_k(\varphi)$ can be expressed as follows:
\begin{equation}\label{eqn:mu}
\begin{split}
\mu_k(\varphi) = \frac 1 V &\int_X \Big(\log \frac{\omega_\varphi^k\wedge \omega^{n-k}}{\omega^n} + \lambda \varphi \Big) \omega_\varphi^k\wedge \omega^{n-k} - \frac{\lambda}{V(k+1)}\Big(\int_X \varphi \sum_{j=0}^{k} \omega_{\varphi}^{k-j}\wedge \omega^{n-k+j} \Big) \\
&  - \frac 1 V \int_X \varphi \sum_{j=1}^k(\ric(\omega) - \lambda\omega)\wedge \omega_\varphi^{k-j}\wedge \omega^{n-k+j - 1} . 
\end{split}
\end{equation}
\end{theorem}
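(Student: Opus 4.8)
The plan is to verify that the right-hand side of \eqref{eqn:mu}, call it $\widetilde\mu_k(\varphi)$, has the variation prescribed by \eqref{eqn:d mu}; since \eqref{eqn:d mu} together with the normalization $\mu_k(0)=0$ determines $\mu_k$ uniquely along paths (and $SH_k(X,\omega)$ is connected by line segments), and since \eqref{eqn:d mu} is a closed $1$-form on this space, this will simultaneously establish well-definedness and the formula. So I fix a smooth path $\phi_t\in SH_k(X,\omega)$ with $\dot\phi_t=\psi$ and differentiate $\widetilde\mu_k(\phi_t)$ term by term. The basic computational inputs are: (i) $\frac{d}{dt}\,\omega_{\phi_t}^k\wedge\omega^{n-k} = k\,\ddbar\psi\wedge\omega_{\phi_t}^{k-1}\wedge\omega^{n-k}$, which by the definition of $G$ equals $(\Delta_G\psi)\,\omega_{\phi_t}^k\wedge\omega^{n-k}$; (ii) $\frac{d}{dt}\log\frac{\omega_{\phi_t}^k\wedge\omega^{n-k}}{\omega^n} = \Delta_G\psi$; and (iii) for any fixed closed $(1,1)$-form $\beta$, $\frac{d}{dt}\int_X \phi_t\,\beta\wedge\omega_{\phi_t}^{k-j}\wedge\omega^{n-k+j-1}$ expands by Leibniz into a $\dot\phi_t$-term plus $(k-j)$ copies of a $\ddbar\psi$-term, the latter to be integrated by parts.

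Carrying this out, the first term of $\widetilde\mu_k$ contributes $\frac1V\int_X (\Delta_G\psi + \lambda\psi)\,\omega_{\phi_t}^k\wedge\omega^{n-k} + \frac1V\int_X (\log\tfrac{\omega_{\phi_t}^k\wedge\omega^{n-k}}{\omega^n}+\lambda\phi_t)\,k\,\ddbar\psi\wedge\omega_{\phi_t}^{k-1}\wedge\omega^{n-k}$. Integrating by parts twice in the first piece, $\int_X \Delta_G\psi\,\omega_{\phi_t}^k\wedge\omega^{n-k} = k\int_X \psi\,\ddbar(1)\wedge\cdots = 0$ is \emph{not} quite right — rather one moves the $\ddbar$ onto $\log\tfrac{\omega_{\phi_t}^k\wedge\omega^{n-k}}{\omega^n}$, producing $-k\int_X \psi\,\ddbar\log(\cdots)\wedge\omega_{\phi_t}^{k-1}\wedge\omega^{n-k} = k\int_X\psi\,\widehat\ric(\omega_{\phi_t})\wedge\omega_{\phi_t}^{k-1}\wedge\omega^{n-k}$ by the definition of the generalized Ricci curvature. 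The $\lambda$-linear terms from the first line and the entire second line of \eqref{eqn:mu} must then be collected and shown to combine into exactly $-\frac{k\lambda}{V}\int_X\psi\,\omega_{\phi_t}^k\wedge\omega^{n-k}$ plus a cancellation of all remaining $\ric(\omega)$-type pieces; the telescoping structure of the sums $\sum_{j=0}^k$ and $\sum_{j=1}^k$, after integration by parts converts a $\ddbar\psi\wedge\omega_{\phi_t}^{k-j}$ into $\psi\wedge\ddbar\omega$-type terms that shift the index $j$, is precisely what makes these collapse. Assembling everything gives $\frac{d}{dt}\widetilde\mu_k(\phi_t) = \frac kV\int_X\psi\,(\widehat\ric(\omega_{\phi_t})-\lambda\omega_{\phi_t})\wedge\omega_{\phi_t}^{k-1}\wedge\omega^{n-k}$, wait — with the sign in \eqref{eqn:d mu}, $-\frac kV\int_X\psi(\widehat\ric(\omega_{\phi_t})-\lambda\omega_{\phi_t})\wedge\omega_{\phi_t}^{k-1}\wedge\omega^{n-k}$; keeping careful track of the sign produced by the double integration by parts of the entropy term is where I expect most errors to creep in.

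The main obstacle is therefore purely bookkeeping: correctly organizing the many $\ddbar\psi$-terms that arise from differentiating the polynomial-in-$\phi_t$ expressions, integrating each by parts the right number of times, and checking that the non-telescoping remainders all cancel so that only the advertised variation survives — in particular that the rather asymmetric-looking coefficients $\frac{1}{k+1}$ and the two different summation ranges are forced. A cleaner route to package this, which I would use to cross-check, is to note that each individual summand $\int_X\varphi\,\beta\wedge\omega_\varphi^{a}\wedge\omega^{b}$ (with $\beta\in\{\omega,\omega_\varphi,\ric(\omega)\}$, $a+b$ fixed) is, up to a combinatorial constant, a Deligne-pairing / Aubin–Mabuchi-type functional whose first variation is classical and $1$-form-exact; summing these known variations and matching against \eqref{eqn:d mu} reduces the whole theorem to a finite linear-algebra identity among binomial coefficients, which is routine to verify. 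Either way, once the variation matches and $\widetilde\mu_k(0)=0$ is checked by inspection (the $\log$ term vanishes and the $\varphi$-linear terms vanish at $\varphi=0$), the proof is complete.
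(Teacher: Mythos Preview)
Your proposal is correct and follows essentially the same route as the paper: differentiate the explicit expression \eqref{eqn:mu} along a path, use $\frac{d}{dt}\omega_{\phi_t}^k\wedge\omega^{n-k}=(\Delta_G\dot\phi_t)\,\omega_{\phi_t}^k\wedge\omega^{n-k}$, integrate by parts to turn $\ddbar\log\frac{\omega_{\phi_t}^k\wedge\omega^{n-k}}{\omega^n}$ into $\ric(\omega)-\widehat\ric(\omega_{\phi_t})$, and check that the remaining $\lambda$- and $\ric(\omega)$-terms telescope to yield \eqref{eqn:d mu}. The paper's write-up is terser and does not separately invoke the Deligne-pairing cross-check or the normalization at $\varphi=0$, but the substance of the argument is identical.
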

We remark that the first integral in \eqref{eqn:mu} corresponds to the ``entropy term'', the second one to the usual $J$-functional, and the last one to the $J_{\ric(\omega) - \lambda\omega}$-functional in Kahler geometry.
\begin{proof}
It suffices for us to show the variation of $\mu_k$ is given by the formula~\ref{eqn:d mu}. For simplicity we omit the subscript $t$ in $\phi_t$ and write $\phi = \phi_t$, then we compute the variation of $\mu_k$ as defined above
    \begin{equation*}
\begin{split}
    \frac{d}{dt}\mu_k(\phi_t) &= \frac{1}{V}\int_X(\lapl_{G}\dot{\phi}+\lambda\dot{\phi})\,\omega_\phi^k\wedge \omega^{n-k} 
    + \frac 1 V \int_X \lapl_G\dot{\phi}\Big(\log \frac{\omega_\phi^k\wedge \omega^{n-k}}{\omega^n} + \lambda \phi \Big) \omega_\phi^k\wedge \omega^{n-k}\\
    & \qquad 
    -\frac{1}{V}\left(\int_X\lambda\dot{\phi} \omega_{\phi}^{k}\wedge \omega^{n-k}\right)
    -\frac 1 V \int_X\dot{\phi}\sum_{j=1}^k(\ric(\omega) - \lambda\omega)\wedge \omega_\phi^{k-j}\wedge \omega^{n-k+j - 1}\\
    & \qquad  -\frac 1 V \int_X\dot{\phi}(\ric(\omega) - \lambda\omega)\wedge\left(k\omega_{\phi}^{k-1}\wedge\omega^{n-k}-\sum_{j=1}^{k} \omega_\phi^{k-j}\wedge \omega^{n-k+j - 1}\right)\\
    &= \frac k V \int_X \dot{\phi}\Big(\ric(\omega)-\widehat{\ric}(\omega_{\phi}) + \lambda \ddbar \phi \Big)\wedge \omega_\phi^{k-1}\wedge \omega^{n-k}\\
    &\qquad -\frac k V \int_X\dot{\phi}(\ric(\omega) - \lambda\omega)\wedge\omega_{\phi}^{k-1}\wedge\omega^{n-k}\\
    &= -\frac k V \int_X \dot{\phi}\Big(\widehat{\ric}(\omega_{\phi})-\lambda\omega_{\phi}\Big)\wedge \omega_\phi^{k-1}\wedge \omega^{n-k} 
\end{split}
\end{equation*}
\end{proof}

Given the definition of $\frac{d}{dt} \mu_k$ in \eqref{eqn:d mu}, we can also compute the second variation of $\mu_k$, which is given by
\begin{equation}
\begin{split}
    \frac{d^2}{dt^2}\mu_k(\varphi_t) &= -\frac k V \int_X \ddot{\varphi}\Big(\widehat{\ric}(\omega_{\varphi})-\lambda\omega_{\varphi}\Big)\wedge \omega_\varphi^{k-1}\wedge \omega^{n-k} -\frac k V \int_X \dot{\varphi}\Big(-\ddbar\lapl_G\dot{\varphi}-\lambda\ddbar\dot{\varphi}\Big)\wedge \\
   &\qquad \wedge \omega_\varphi^{k-1}\wedge \omega^{n-k}
     -\frac{k(k-1)}{V} \int_X \dot{\varphi}\Big(\widehat{\ric}(\omega_{\varphi})-\lambda\omega_{\varphi}\Big)\wedge \ddbar\dot{\varphi}\wedge\omega_\varphi^{k-2}\wedge \omega^{n-k} \\
    & = -\frac k V \int_X \ddot{\varphi}\Big(\widehat{\ric}(\omega_{\varphi})-\lambda\omega_{\varphi}\Big)\wedge \omega_\varphi^{k-1}\wedge \omega^{n-k} 
    +\frac 1 V \int_X \lapl_G\dot{\varphi}\left(\lapl_{G}\dot{\varphi}+\lambda\dot{\varphi}\right)\omega_{\varphi}^{k}\wedge\omega^{n-k}\\
    &\qquad -\frac{k(k-1)} V \int_X \dot{\varphi}\Big(\widehat{\ric}(\omega_{\varphi})-\lambda\omega_{\varphi}\Big)\wedge \ddbar\dot{\varphi}\wedge\omega_\varphi^{k-2}\wedge \omega^{n-k} \\
    &= -\frac k V \int_X (\ddot{\varphi}-|\del\dot{\varphi}|^2_G)\Big(\widehat{\ric}(\omega_{\varphi})-\lambda\omega_{\varphi}\Big)\wedge \omega_\varphi^{k-1}\wedge \omega^{n-k} 
    +\frac 1 V \int_X \left|\lapl_{G}\dot{\varphi}\right|^2\omega_{\varphi}^{k}\wedge\omega^{n-k}\\
    &\qquad +\frac{k(k-1)} V \int_X \im\del\dot{\varphi}\wedge\dbar\dot{\varphi}\wedge\widehat{\ric}(\omega_{\varphi})\wedge\omega_\varphi^{k-2}\wedge \omega^{n-k} 
    -\frac k V \int_X |\del\dot{\varphi}|_G^2\widehat{\ric}_{\varphi}\wedge \omega_\varphi^{k-1}\wedge \omega^{n-k} 
\end{split}
\end{equation}

\begin{remark}
More generally, we can also consider the $\alpha$-twisted Hessian Mabuchi energy (with $\lambda = \bar \alpha/k$)
\begin{equation}\begin{split}\mu_{\alpha,k}(\varphi) & = \mu_k(\varphi) + \frac k V \int_0^1 \int_X \dot\varphi_s (\ric(\omega) - \alpha  )\wedge \omega_{\varphi_s}^{k-1}\wedge \omega^{n-k}\\
& =\frac 1 V \int_X \Big(\log \frac{\omega_\varphi^k\wedge \omega^{n-k}}{\omega^n} + \lambda \varphi \Big) \omega_\varphi^k\wedge \omega^{n-k} - \frac{\lambda}{V(k+1)}\Big(\int_X \varphi \sum_{j=0}^{k} \omega_{\varphi}^{k-j}\wedge \omega^{n-k+j} \Big) \\
 & \qquad  - \frac 1 V \int_X \varphi \sum_{j=1}^k(\alpha - \lambda\omega)\wedge \omega_\varphi^{k-j}\wedge \omega^{n-k+j - 1} . 
\end{split}\end{equation}
whose variation is given by
$$\frac{d}{dt} \mu_{k,\alpha}(\phi_t) = - \frac{k}{V}\int_X \dot\phi_t ( \widehat{\ric}(\omega_{\phi_t}) - \lambda \omega_{\phi_t} - \ric(\omega) + \alpha  )\wedge \omega_{\phi_t} ^{k-1}\wedge \omega^{n-k}.$$
It follows that an $\omega_\varphi$ satisfying \eqref{eqn:main 1} is a critical point of $\mu_{k,\alpha}$.
\end{remark}

%We can similarly define the Ding functional and the Hessian energy functionals I, J, ...
\subsection{The Deligne pairing and the energy $\mu_k$} Similar to the interpretation of the Mabuchi $K$-energy as the metric of some line bundle from the Deligne pairing, as shown in Phong and Sturm \cite{PS0,PS} (see also \cite{PRS}), we explain in this section that when $X$ is a projective manifold, the energy functional $\mu_k(\cdot)$ in \eqref{eqn:d mu} can be regarded as the metric on some  $\mathbb R$-line bundle from the Deligne pairing. Suppose $L_0,\ldots, L_n$ are holomorphic Hermitian line bundles on $X$, then the Deligne pairing 
$$\langle L_0,\ldots, L_n \rangle$$ is a Hermitian line bundle over a point (for the precise definition, we refer to \cite{PS0}). The change of metric formula (c.f. (2.10) in \cite{PS}) states that 
$$\langle L_0\otimes {\mathcal O}(\phi_0),\ldots, L_n\otimes \mathcal O(\phi_n) \rangle= \langle L_0,\ldots, L_n \rangle \otimes \mathcal O(E)$$ where $\mathcal O(f)$ denotes the trivial line bundle equipped with the Hermitian metric $h e^{-f}$, and $E$ is given by 
\begin{equation}\label{eqn:metric formula}E = \int_X \sum_{j=0}^n \phi_j \wedge_{k<j} c_1(L_k \otimes \mathcal O(\phi_k)) \wedge \wedge_{j<k\le n} c_1(L_k)\end{equation}
and $c_1 (L\otimes \mathcal O(\phi)) = c_1(L)+ \ddbar \phi$ is the Chern form of the Hermitian line bundle $L\otimes \mathcal O(\phi)$.

Let $L\to X$ be a positive line bundle equipped with a Hermitian metric $h$, such that $\omega = {\mathrm {Ric}}(h) = -\ddbar \log h\in c_1(L)$. Let $K_X$ be the canonical line bundle on $X$. The K\"ahler metric $\omega$ induces a metric $\frac{1}{\omega^n}$  on $K_X$. We define a metrized $\mathbb R$-line bundle 
$$\mathcal M_{k, h} = \langle K_X, L,\ldots, L  \rangle ^{1/V} \langle L,\ldots, L \rangle^{\lambda k/ V(1+k)} $$
where $L$ is given the metric $h$ and $K_X$ is equipped with the metric $\frac{1}{\omega^n}$. With the help of the formula \eqref{eqn:metric formula}, we can easily verify that 
\begin{equation} \label{eqn:mu k formula}\begin{split} & \langle K_X \otimes \mathcal O(\Theta), \underbrace{L_\varphi,\ldots,L_\varphi}_{k \text{ terms}}, L,\ldots, L  \rangle ^{1/V} \langle \underbrace {L_\varphi,\ldots,L_\varphi}_{k+1 \text{ terms}} L,\ldots, L \rangle^{\lambda k/ V(1+k)} 
\\ = & \langle K_X, L,\ldots, L  \rangle ^{1/V} \langle L,\ldots, L \rangle^{\lambda k/ V(1+k)}\otimes \mathcal O(\mu_k(\varphi)),\end{split}\end{equation}
where $\Theta = \log \frac{\omega_\varphi^k\wedge \omega^{n-k}}{\omega^n}\in C^\infty(X,\mathbb R)$, and $L_\varphi = L\otimes \mathcal O(\varphi)$ is the line bundle $L$ equipped with the Hermitian metric $h e^{-\varphi}$. From \eqref{eqn:mu k formula} and \eqref{eqn:mu} we see that the functional $\mu_k(\varphi)$ can be interprated as the change of a Hermitian metric on the Deligne pairing line bundle $\mathcal M_{k, h}$.

\subsection{Space of $k$-Hessian potentials}
Following \cite{M}, we can define a Riemannian structure on the space of $k$-potentials and investigate its geometry. For simplicity we will assume in this section that $\int_X\omega^n = 1$. Set
\begin{equation}
\mathcal H_k(X, \omega) = \{u\in C^\infty(X) \ | \ \omega_u^j\wedge\omega^{n-j} > 0 \text{ for } j = 1, \ldots, k\}.
\end{equation}
This is an open set of $C^{\infty}(X)$, hence we can identify the tangent space $T_u\mathcal H_k$ with the space of smooth functions on $X$. Let us define an inner product of two tangent vectors $\varphi,\psi\in T_u\mathcal H_k$ by
\begin{equation}\label{eqn:Metric on H_k}
\langle\varphi,\psi\rangle = \int_X\varphi\,\psi\,\omega_u^k\wedge\omega^{n-k}.
\end{equation}
With this inner product, we can formally view $\mathcal H_k$ as an infinite dimensional Riemannian manifold. 
\subsection{Geodesic equation}
Now we want to define a connection $D$ that is compatible with (\ref{eqn:Metric on H_k}). This means that if $\varphi$, $\psi$ are tangent vector fields along a curve $u_t\in\mathcal H_k$, we should require
\begin{equation}\label{eqn:Time derivative of inner product 1}
\frac{d}{dt}\langle\varphi,\psi\rangle = \langle D_{\dot u}\varphi,\psi\rangle + \langle\varphi,D_{\dot u}\psi\rangle.
\end{equation}
Integrating by parts shows that
\begin{equation}\label{eqn:Time derivative of inner product 2}
\frac{d}{dt}\langle\varphi,\psi\rangle = \int_X\bigg\{\bigg(\dot\varphi - \frac12G^{p\bar q}(\dot u_{\bar q}\varphi_p + \varphi_{\bar q}\dot u_p)\bigg)\psi + \varphi\bigg(\dot\psi - \frac12G^{p\bar q}(\dot u_{\bar q}\psi_p + \psi_{\bar q}\dot u_p)\bigg)\bigg\}\omega_u^k\wedge\omega^{n-k}.
\end{equation}
Combining (\ref{eqn:Time derivative of inner product 1}) and (\ref{eqn:Time derivative of inner product 2}) motivates the following definition of a connection on $\mathcal H_k(X, \omega)$:
\begin{equation}\label{eqn:Connection on space of k-potentials}
D_{\dot u}\varphi = \dot\varphi - \frac12G^{p\bar q}(\dot u_{\bar q}\varphi_p + \varphi_{\bar q}\dot u_p).
\end{equation}
Thus we see that the equation $D_{\dot u}\dot u = 0$ for a geodesic in this setting is
\begin{equation}\label{eq: geodesics-1}
\ddot u - G^{p\bar q}\dot u_{\bar q}\dot u_p = 0.
\end{equation}
Similar to \cite{M, S, D}, we can recast this as a degenerate Hessian equation on a product manifold of dimension $n+1$. First we complexify the time variable $t$ by adding an imaginary part, and assume everything is independent of the imaginary part of $t$. Then multiplying equation~\eqref{eq: geodesics-1} by the term $\sqrt{-1}dt\wedge d\bar t \wedge \omega_{u}^{k}\wedge\omega^{n-k}$, we get
\[(u_{t\bar t} - G^{p\bar q} u_{t\bar q}u_{p\bar t}) \sqrt{-1}dt\wedge d\bar t \wedge \omega_{u}^{k}\wedge\omega^{n-k}= 0.\]
which is equivalent to the following degenerate, homogenous Hessian equation on the product manifold $M\times \mathbb{C}_t$
\begin{equation}\label{eq: geodesics-2}
(\omega+\ddbar u)^{k+1}\wedge\omega^{n-k} = 0. 
\end{equation}
Thus equation for geodesics segments joining two potentials $\varphi_0, \varphi_1$ can be formulated as the following boundary value problem for a homogenous degenerate Hessian equation. 
\begin{equation}\label{eqn: geod}
\begin{cases}
    ({\omega}+\ddbar u)^{k+1}\wedge {\omega}^{n-k} = 0\\
    u(\cdot, t) = \varphi_0 \text{ for } \re t = 0\\
    u(\cdot, t) = \varphi_1 \text{ for } \re t = 1 
\end{cases}
\end{equation}

There are some major analytic difficulties for solving the geodesic equation arising from the fact that it is degenerate in two different ways: not only is data on the right hand side of the equation is zero, but also the $(1, 1)$-form $\omega$ is also degenerate in the $t$ direction. This causes a lot of analytic difficulties, and we plan to investigate the existence and regularity of geodesics for this system in a subsequent work. 

\subsection{Curvature of $\mathcal H_k$}
In the rest of this section, we compute the curvature of $D$ and prove the following theorem. 
\begin{theorem}\label{thm:nonpositive-curvature}
    The sectional curvature of $D$ on $\mathcal H_k$ is non-positive. 
\end{theorem}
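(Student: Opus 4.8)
The plan is to compute the curvature of $D$ directly, reduce the statement to the sign of a single integral, and settle that sign; the decisive ingredient will be the concavity of $\log\sigma_k$ on $\Gamma_k$, i.e.\ Lemma~\ref{lem: Gamma_k-basics}(2). First I would fix $u\in\mathcal H_k$ and $\varphi,\psi\in T_u\mathcal H_k=C^\infty(X)$. As $D$ is torsion-free and compatible with the metric \eqref{eqn:Metric on H_k} --- which is precisely what the derivation of \eqref{eqn:Connection on space of k-potentials} guarantees --- it is the (formal) Levi--Civita connection, its curvature $R(Y,Z)W=D_YD_ZW-D_ZD_YW-D_{[Y,Z]}W$ has the usual symmetries, and the sign of the sectional curvature of $\mathrm{span}\{\varphi,\psi\}$ is that of $\langle R(\varphi,\psi)\psi,\varphi\rangle=\int_X\big(R(\varphi,\psi)\psi\big)\,\varphi\,\omega_u^k\wedge\omega^{n-k}$, which is what must be shown to be $\le 0$. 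Since $R$ is tensorial, I would compute it with $\varphi,\psi$ regarded as constant vector fields on $\mathcal H_k$, for which
\[
R(\varphi,\psi)\psi=(\del_\varphi\Gamma)(\psi,\psi)-(\del_\psi\Gamma)(\varphi,\psi)+\Gamma\big(\varphi,\Gamma(\psi,\psi)\big)-\Gamma\big(\psi,\Gamma(\varphi,\psi)\big),
\]
where $\Gamma_u(a,b)=-\tfrac12 G^{p\bar q}(a_{\bar q}b_p+b_{\bar q}a_p)$ is the Christoffel form read off from \eqref{eqn:Connection on space of k-potentials} and $\del_\varphi$ denotes differentiation at $u$ in the direction $\varphi$. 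Unlike the case $k=n$, there is no symmetric-space structure to invoke (for $k<n$ the form $\omega_u$ need not even be positive), so this is a genuine computation.

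The computation rests on three facts; write $F(u)=\log\tfrac{\omega_u^k\wedge\omega^{n-k}}{\omega^n}$, so that $G^{p\bar q}=\partial F/\partial u_{p\bar q}$ and $d\nu_u:=e^F\omega^n=\omega_u^k\wedge\omega^{n-k}$. (i)~\emph{Self-adjointness:} $\lapl_G$ is self-adjoint for $d\nu_u$, since $g\,(\lapl_G f)\,d\nu_u=k\,g\,\ddbar f\wedge\omega_u^{k-1}\wedge\omega^{n-k}$ and integrating $\ddbar$ by parts against the \emph{closed} form $\omega_u^{k-1}\wedge\omega^{n-k}$ makes $\int_X g\,(\lapl_G f)\,d\nu_u$ symmetric in $f,g$; equivalently $G^{p\bar q}$ is divergence-free for $d\nu_u$, the same fact that produced the symmetric $\tfrac12$ in \eqref{eqn:Connection on space of k-potentials}. (ii)~\emph{Variation:} $\del_\varphi(d\nu_u)=(\lapl_G\varphi)\,d\nu_u$ and $\del_\varphi G^{p\bar q}=F^{p\bar q,r\bar s}\varphi_{r\bar s}$, with $F^{p\bar q,r\bar s}:=\partial^2F/\partial u_{p\bar q}\partial u_{r\bar s}$ and $\varphi_{r\bar s}=\del_r\del_{\bar s}\varphi$; this second formula is the genuinely new feature relative to the Monge--Amp\`ere case $k=n$, where $G^{p\bar q}=g_u^{p\bar q}$ is an inverse Hermitian matrix and $F^{p\bar q,r\bar s}=-g_u^{p\bar s}g_u^{r\bar q}$ factors as a product, whereas for $k<n$ there is no such product structure. (iii)~\emph{Concavity:} $F$ is a concave function of the Hermitian matrix $(u_{p\bar q})$ on the admissible set, i.e.\ $\tfrac{d^2}{dt^2}\big|_{t=0}F(u+tH)=F^{p\bar q,r\bar s}H_{p\bar q}H_{r\bar s}\le 0$ for every Hermitian $H$. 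This is exactly where Lemma~\ref{lem: Gamma_k-basics}(2) enters: Garding's inequality makes $\sigma_k^{1/k}$ concave on $\Gamma_k$, this passes to the matrix function $M\mapsto\sigma_k(\lambda(M))^{1/k}$, and $\log\sigma_k=k\log\big(\sigma_k^{1/k}\big)$ is then concave, being the composition of the increasing concave function $k\log(\cdot)$ with a concave function.

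With these in place, the computation would follow the Mabuchi--Semmes--Donaldson one \cite{M,S,D} for $k=n$: substitute (ii) into the formula for $R(\varphi,\psi)\psi$, pair with $\varphi$, integrate against $d\nu_u$, and integrate by parts using (i). The second derivatives of $\varphi,\psi$ produced by $(\del_\varphi\Gamma)(\psi,\psi)$ and by the spatial derivatives inside $\Gamma(\varphi,\Gamma(\psi,\psi))$ should be removed by integration by parts; the terms carrying the spatial derivatives $\del_a G^{p\bar q}$ should cancel --- and here, in contrast with the K\"ahler case, one cannot simply choose coordinates making them vanish at a point, because for $k<n$ the form $\omega_u$ need not be K\"ahler; what remains should organize into a single pointwise non-positive integral $\int_X\mathcal Q\,d\nu_u$ with $\mathcal Q$ quadratic in $\del\varphi,\del\psi$. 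The heart of the sign is that the $F''$-dependent part of $\mathcal Q$ enters precisely through the pairing $F^{p\bar q,r\bar s}\Xi_{p\bar q}\overline{\Xi_{r\bar s}}$ with $\Xi_{p\bar q}:=\tfrac{\sqrt{-1}}{2}(\varphi_p\psi_{\bar q}-\psi_p\varphi_{\bar q})$ a Hermitian matrix, hence $\le 0$ by (iii), while the purely $G$-dependent terms are non-positive because $G^{p\bar q}>0$. This gives $\langle R(\varphi,\psi)\psi,\varphi\rangle\le 0$, which is the assertion. (For $k=n$, inserting the product form of $F^{p\bar q,r\bar s}$ recovers the classical Poisson-bracket formula $-\tfrac14\int_X|\{\varphi,\psi\}_{\omega_\varphi}|^2\,\omega_\varphi^n$, up to normalization.)

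The hard part will be the bookkeeping just sketched. Because $G^{p\bar q}$ here is a ratio of $(n,n)$-forms rather than an inverse matrix, one must verify that the $\del_a G^{p\bar q}$-terms really cancel after integration by parts --- the Monge--Amp\`ere identity $\partial(g^{-1})=-g^{-1}(\partial g)g^{-1}$ being replaced by the honest two-tensor $F^{p\bar q,r\bar s}$ --- and, above all, that the surviving $F''$-contribution appears exactly in the negative-semidefinite pairing $F^{p\bar q,r\bar s}\Xi_{p\bar q}\overline{\Xi_{r\bar s}}$ to which (iii) applies, with nothing left over of uncontrolled sign. The concavity of $\log\sigma_k$ is precisely the structural input that forces non-positivity; without it the sign of the curvature would be unclear.
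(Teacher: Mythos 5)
Your overall plan (computing the curvature of the connection \eqref{eqn:Connection on space of k-potentials} directly and diagonalizing $\omega_u$ at a point) is the same as the paper's, but the decisive step --- the sign --- rests on a structural claim that is not what the computation produces, and the input you designate as the crux, concavity of $\log\sigma_k$, is not strong enough. In coordinates where $g_{i\bar j}=\delta_{ij}$, $u_{i\bar j}$ is diagonal with eigenvalues $\lambda\in\Gamma_k$, $X_i=\del_i u_t$, $Y_i=\del_i u_s$, the curvature splits into two pieces of opposite tendency: a wedge term $k(k-1)\int_X \im\del u_s\wg\dbar u_s\wg\im\del u_t\wg\dbar u_t\wg\omega_u^{k-2}\wg\omega^{n-k}$, whose coefficients $\sigma_{k-2,ij}$ are \emph{non-negative} on $\Gamma_k$ (so this piece is $\ge 0$), and a Cauchy--Schwarz deficit $\int_X\big(Q(\nabla u_s,\nabla u_t)^2-Q(\nabla u_s,\nabla u_s)Q(\nabla u_t,\nabla u_t)\big)\,\omega_u^k\wg\omega^{n-k}\le 0$ coming from $G>0$. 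Non-positivity therefore requires dominating the first piece \emph{termwise} by part of the second, i.e.\ the entrywise inequality $\sigma_{k-2,ij}\,\sigma_k\le\sigma_{k-1,i}\,\sigma_{k-1,j}$, which after cancellation is Newton's inequality $\sigma_{k-2,ij}\,\sigma_{k,ij}\le\sigma_{k-1,ij}^2$. This is genuinely stronger than the concavity of $\log\sigma_k$ (the paper remarks on exactly this point): negative semidefiniteness of the Hessian of $\log\sigma_k$, tested on $2\times 2$ minors, only yields $\sigma_{k-2,ij}\,\sigma_k\le 2\,\sigma_{k-1,i}\,\sigma_{k-1,j}$, since off-diagonal Hessian entries of a concave function need not be $\le 0$; with that factor $2$ the positive wedge term is not controlled and the argument does not close. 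So the ``bookkeeping'' you defer is precisely where the claimed structure breaks, and the missing idea is the entrywise Newton-type inequality (together with Young's inequality to see that each factor $\tfrac12(|X_i|^2|Y_j|^2+|X_j|^2|Y_i|^2)-\re(X_i\bar Y_iY_j\bar X_j)$ is $\ge0$).

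The case $k=n$ already shows that your proposed organization cannot be right. There $F^{p\bar q,r\bar s}=-g_u^{p\bar s}g_u^{r\bar q}$, so $F^{p\bar q,r\bar s}\Xi_{p\bar q}\overline{\Xi_{r\bar s}}=-\tr\big((g_u^{-1}\Xi)^2\big)$, whereas the classical curvature integrand is $-\big(\tr(g_u^{-1}\Xi)\big)^2$ (the Poisson bracket squared); the discrepancy $\tr\big((g_u^{-1}\Xi)^2\big)-\big(\tr(g_u^{-1}\Xi)\big)^2$ has no sign, so the curvature is not ``$F''(\Xi,\overline\Xi)$ plus a non-positive $G$-remainder,'' and your parenthetical claim that the product form of $F''$ recovers the Poisson-bracket formula does not hold for this $\Xi$. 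Even the algebraic shape is off: $|\Xi_{ij}|^2$ produces cross terms $\re(X_iX_j\bar Y_i\bar Y_j)$, while the terms that actually occur are $\re(X_i\bar Y_iY_j\bar X_j)$. In the correct bookkeeping the Poisson-bracket-type contribution appears inside the Cauchy--Schwarz deficit as $-\frac1{\sigma_k}\big(\sum_i\sigma_{k-1,i}\,\mathrm{Im}(X_i\bar Y_i)\big)^2$, and at $k=n$ the entrywise coefficients $\sigma_{k-2,ij}-\frac{\sigma_{k-1,i}\sigma_{k-1,j}}{\sigma_k}$ vanish identically, so only that term survives --- that is how the classical formula is recovered. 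In short: the reduction to ``concavity of $\log\sigma_k$ plus positivity of $G$'' is a genuine gap; the proof needs the sharper entrywise estimate $\sigma_{k-2,ij}\sigma_k\le\sigma_{k-1,i}\sigma_{k-1,j}$, which is where Newton's inequality, not Garding/log-concavity, does the work.
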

\begin{proof}
First we write the connection on $\mathcal H_k$ as 
\[D_{\dot u}\varphi = \dot\varphi - Q(\nabla\dot u, \nabla \varphi).
\]
where \[Q(\nabla \dot u, \nabla \psi) = \frac12G^{p\bar q}(\dot u_{\bar q}\varphi_p + \varphi_{\bar q}\dot u_p) = \frac k 2\frac{\sqrt{-1}(\partial\dot{u}\wedge\bar{\partial}\varphi+\partial \varphi\wedge \bar{\partial}\dot u)\wedge\omega_u^{k-1}\wedge\omega^{n-k}}{\omega_u^{k}\wedge\omega^{n-k}}.\] 
Suppose that $u(x,t,s)$ is a family of $k$-potentials, and let $R$ be the curvature of $D$, then we compute $R(u_t, u_s)\eta$,
\begin{align*}
    R(u_t, u_s)\eta &= D_t(\eta_s-Q(\nabla \eta, \nabla  u_s))-D_s(\eta_t-Q(\nabla \eta, \nabla  u_t))\\
    & = -\frac{\partial}{\partial t}Q(\nabla\eta, \nabla  u_s)+\frac{\partial}{\partial s}Q(\nabla\eta, \nabla u_t)-Q(\nabla u_t, \nabla (\eta_s-Q(\nabla \eta, \nabla  u_s)))\\
    & \qquad + Q(\nabla u_s, \nabla (\eta_t-Q(\nabla \eta, \nabla  u_t)))\\
    & = \frac{k(k-1)}{2}\frac{\im(\del \eta\wg\dbar u_t+\del u_t\wg \dbar \eta)\wg\ddbar u_s\wg\omega_u^{k-2}\wg\omega^{n-k}}{\omega_u^k\wedge\omega^{n-k}}\\
    & \qquad -\frac{k(k-1)}{2}\frac{\im(\del \eta\wg\dbar u_s+\del u_s\wg \dbar \eta)\wg\ddbar u_t\wg\omega_u^{k-2}\wg\omega^{n-k}}{\omega_u^k\wedge\omega^{n-k}}\\
    &\qquad +Q(\nabla\eta, \nabla u_s)\lapl_Gu_t-Q(\nabla\eta, \nabla u_t)\lapl_Gu_s\\
    &\qquad +Q(\nabla u_t, \nabla Q(\nabla \eta, \nabla  u_s)) - Q(\nabla u_s, \nabla Q(\nabla \eta, \nabla  u_t))\\
\end{align*}

\begin{align*}
    \langle R(u_t, u_s)u_s, u_t\rangle &= \frac{k(k-1)}{2}\int_Xu_t\im(\del u_s\wg\dbar u_t+\del u_t\wg \dbar u_s)\wg\ddbar u_s\wg\omega_u^{k-2}\wg\omega^{n-k}\\
    & \qquad -k(k-1)\int_Xu_t\im\del u_s\wg \dbar u_s\wg\ddbar u_t\wg\omega_u^{k-2}\wg\omega^{n-k}\\
    &\qquad +\int_X\left(Q(\nabla u_s, \nabla u_t)^2-Q(\nabla u_s, \nabla u_s)Q(\nabla u_t, \nabla u_t)\right)\omega_u^{k}\wg\omega^{n-k}\\
    & = k(k-1)\int_X (\im\del u_s\wg\dbar u_s)\wg (\im \del u_t\wg \dbar u_t)\wg\omega_u^{k-2}\wg\omega^{n-k}\\
    &\qquad +\int_X\left(Q(\nabla u_s, \nabla u_t)^2-Q(\nabla u_s, \nabla u_s)Q(\nabla u_t, \nabla u_t)\right)\omega_u^{k}\wg\omega^{n-k}\\
\end{align*}

If we fix a normal coordinate where $g_{i\bar j} = \delta_{i\bar j}$, $u_{i\bar j} = (\lambda_i-1)\delta_{i\bar j}$ and $\del_{i}u_t = X_i$, $\del_iu_s = Y_i$, then we have
\begin{align*}
    \langle R(u_t, u_s)u_s, u_t\rangle & = k!(n-k)!\int_X\sum_{ij }\sigma_{k-2, ij}\frac 1 2(|X_i|^2|Y_j|^2+|X_j|^2|Y_i|^2-X_i\bar Y_i Y_j\bar X_j-Y_i\bar X_iX_j \bar Y_j)\\
    &\qquad +k!(n-k)!\int_X \sum_{ij }\frac{\sigma_{k-1, i}\sigma_{k-1, j}}{4\sigma_k}(X_i\bar Y_iX_j\bar Y_j+X_i\bar Y_i Y_j \bar X_j+Y_i\bar X_i X_j\bar Y_j+Y_i\bar X_iY_j\bar X_j)\\
    & \qquad -k!(n-k)!\int_X\sum_{ij }\frac{\sigma_{k-1, i}\sigma_{k-1, j}}{\sigma_k}|X_i|^2|Y_j|^2\\
    & = k!(n-k)!\int_X\sum_{ij }\sigma_{k-2, ij}(|X_i|^2|Y_j|^2-X_i\bar Y_i Y_j\bar X_j)\\
    &\qquad +k!(n-k)!\int_X \sum_{ij }\frac{\sigma_{k-1, i}\sigma_{k-1, j}}{2\sigma_k}(\re (X_i\bar Y_iX_j\bar Y_j)+X_i\bar Y_i Y_j \bar X_j)\\
    & \qquad -k!(n-k)!\int_X\sum_{ij }\frac{\sigma_{k-1, i}\sigma_{k-1, j}}{\sigma_k}|X_i|^2|Y_j|^2\\
    & = k!(n-k)!\int_X \sum_{ij}\left(\sigma_{k-2, ij}-\frac{\sigma_{k-1, i}\sigma_{k-1, j}}{\sigma_k}\right)(\frac 1 2\left(|X_i|^2|Y_j|^2+|Y_i|^2|X_j|^2)-X_i\bar Y_i Y_j\bar X_j\right)
\end{align*}
\begin{lemma}\label{lemma 2.2}
For any $i, j$, the terms \[\sigma_{k-2, ij}-\frac{\sigma_{k-1, i}\sigma_{k-1, j}}{\sigma_k}\]
are non-positive. 
\end{lemma}
\begin{proof}
The inequality we have to show is equivalent to \[\sigma_{k-2, ij}\sigma_k\leq \sigma_{k-1, i}\sigma_{k-1, j}.\] If we expand both sides, the terms on each side that contains either a multiple $\lambda_i$ or a multiple of $\lambda_j$ will cancel, and it suffices to look at only terms that doesn't contain $\lambda_i$ or $\lambda_j$. Therefore the inequality is equivalent to
\[\sigma_{k-2, ij}\sigma_{k, ij}\leq \sigma_{k-1, ij}^2\]
which follows from Newton's inequality. 
\end{proof}

We remark that Lemma \ref{lemma 2.2} is stronger than the well-known fact that $\log \sigma_k(\lambda)$ is concave in $\Gamma_k$. By Young's inequality, for each $i, j$ we have
\[\frac{1}{2}(|X_i|^2|Y_j|^2 +|X_j|^2|Y_i|^2)\geq |X_i||Y_j||Y_i||X_j|\]
Therefore each term in the sum 
\[\sum_{ij}\left(\sigma_{k-2, ij}-\frac{\sigma_{k-1, i}\sigma_{k-1, j}}{\sigma_k}\right)\left(\frac 1 2(|X_i|^2|Y_j|^2+|Y_i|^2|X_j|^2)- \re(X_i\bar Y_i Y_j\bar X_j)\right)\]
is non-positive, and hence the sectional curvature $R$ of the $L^2$ metric on $\mathcal H_k$ is non-positive, which proves Theorem~\ref{thm:nonpositive-curvature}. 
\end{proof}

\section{$C^0$-estimate}
In this section, we prove a $C^0$ estimate for the system \eqref{eqn:c sigma}  depending on a generalized entropy. The main result of this section is a direct counterpart of Theorem 5.1 in \cite{CC}.
\begin{theorem}\label{thm:1}
Let $(\varphi, F)$ be a smooth solution to \eqref{eqn:c sigma}, then a bound on $\int_X e^{\frac n k F} |F|\omega^n$ implies a bound for $\|F\|_\infty$ and $\|\varphi\|_\infty$.
\end{theorem}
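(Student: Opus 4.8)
The plan is to follow the strategy of Chen--Cheng \cite{CC}, adapted to the Hessian setting, combining a pluripotential-theoretic comparison with the auxiliary complex Monge-Amp\`ere equation and De Giorgi iteration on an energy quantity. First I would reduce the estimate on $\|F\|_\infty$ to one on $\|\varphi\|_\infty$: once $\|\varphi\|_\infty \le C$ is known, the first equation of \eqref{eqn:c sigma} gives $\int_X e^F\omega^n = \int_X \omega_\varphi^k\wedge\omega^{n-k} = V$, so $F$ has controlled integral, and then the linearized equation $\lapl_G F = -\bar\alpha + \tr_G\alpha$ together with elliptic estimates (using that $G^{i\bar j}>0$ on the admissible class, with ellipticity constants controlled once $\|\varphi\|_{C^{1,1}}$ is under control — or, in the spirit of \cite{CC}, using a Moser iteration directly on the equation for $F$ together with the integral bound on $e^{\frac nk F}|F|$) yields $\sup_X |F| \le C$. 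So the real content is the bound on $\|\varphi\|_\infty$ in terms of the generalized entropy $\mathrm{Ent} := \int_X e^{\frac nk F}|F|\omega^n$.

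For $\|\varphi\|_\infty$, the key idea is to introduce an auxiliary function $\psi$ solving a complex Monge-Amp\`ere equation with right-hand side comparable to $e^{\frac nk F}\omega^n$: since $\omega_\varphi^k\wedge\omega^{n-k} = e^F\omega^n$ and admissibility gives, by the G\aa rding/Maclaurin-type inequalities in Lemma \ref{lem: Gamma_k-basics} (applied with $\mu = \lambda$), a pointwise bound $\omega_\varphi^n \ge c\, (\omega_\varphi^k\wedge\omega^{n-k})^{n/k}\,\omega^{?}$ — more precisely $\sigma_n(\lambda)^{1/n} \le C\sigma_k(\lambda)^{1/k}$ in $\Gamma_k$, so $\omega_\varphi^n \le C e^{\frac nk F}\omega^n$ — one does not directly get a lower Monge-Amp\`ere bound, which is why we work with the upper bound. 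I would solve $(\omega + \ddbar\psi)^n = \frac{e^{\frac nk F}}{A}\omega^n$ with $\sup_X\psi = 0$ and $A$ the normalizing constant, which by Ko\l odziej's theorem (or Yau) has $\|\psi\|_\infty$ bounded in terms of $\|e^{\frac nk F}\|_{L^p}$ for some $p>1$ — and the entropy bound on $\int e^{\frac nk F}|F|$ upgrades the $L^1$ bound on $e^{\frac nk F}$ (which is just $\int \omega_\varphi^n/\!\int$-normalized, hence fixed) to an $L\log L$ bound, hence to an $L^p$-type control sufficient for a uniform estimate via the Chen--Cheng refinement of Ko\l odziej's estimate. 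Then one compares $\varphi$ with $\psi$: set $u = \varphi - (1+\delta)\psi$ for small $\delta>0$ and run a De Giorgi iteration on the sublevel sets $\{u < -s\}$, using the inequality between $\omega_\varphi^k\wedge\omega^{n-k}$ and $\omega_\psi^n$ together with the elementary symmetric polynomial inequalities to show that the "energy" $\int_{\{u<-s\}}(-u-s)\,\omega_\varphi^k\wedge\omega^{n-k}$ satisfies a differential inequality forcing it to vanish for $s$ large, which bounds $\inf_X u$ and hence $\inf_X\varphi$; the normalization $\sup_X\varphi = 0$ handles the upper bound trivially.

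The main obstacle, and where the Hessian case genuinely departs from \cite{CC}, is that the natural measure $\omega_\varphi^k\wedge\omega^{n-k}$ is \emph{not} a Monge-Amp\`ere measure of a single potential, so the comparison-principle / pluripotential estimates must be run against the auxiliary $\omega_\psi^n$ while controlling the discrepancy. The crucial analytic input is a pointwise inequality of the form $\omega_\varphi^k\wedge\omega^{n-k} \ge c_n\, (\text{something involving }\omega_\varphi^n)$, valid on the admissible cone, which comes from the Maclaurin inequalities $\sigma_k(\lambda)^{1/k} \ge \sigma_n(\lambda)^{?}\cdots$ together with $\sigma_{k-1,i}>0$ from Lemma \ref{lem: Gamma_k-basics}(1); one must check these interact correctly with the De Giorgi iteration, in particular that the exponents in the resulting isoperimetric-type inequality are in the range where the iteration closes. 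A secondary technical point is establishing the required $L^p$ or $L\log L$ estimate for $e^{\frac nk F}$ from the entropy bound, and feeding it through a Ko\l odziej-type stability estimate with the explicit dependence on $p$ tracked, exactly as in Theorem 5.1 of \cite{CC}; I expect this to be adaptable essentially verbatim once the measure-comparison step above is in place.
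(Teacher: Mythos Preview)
Your plan decouples the system: you propose to bound $\|\varphi\|_\infty$ using only the first equation together with the entropy, and only afterwards use the second equation to bound $F$. This is exactly the step that fails. The hypothesis $\int_X e^{\frac{n}{k}F}|F|\,\omega^n < \infty$ places $e^F$ in the Orlicz class $L^{n/k}\log L$, which is the \emph{critical} exponent for the Dinew--Ko\l odziej / GPT estimate on the Hessian equation $\omega_\varphi^k\wedge\omega^{n-k}=e^F\omega^n$; the $L^\infty$ bound on $\varphi$ from those results requires $e^F\in L^p$ for some $p>n/k$, and the borderline case genuinely does not close by a De Giorgi iteration alone. (The paper says this explicitly just below the statement of the theorem.) Your suggested comparison of $\varphi$ with the auxiliary $\psi$ also runs into the problem that $\varphi$ is only $k$-admissible, so $\omega_\varphi$ need not be positive and $\omega_\varphi^n$ need not be a nonnegative measure; in particular the Maclaurin inequality $\sigma_n^{1/n}\le C\sigma_k^{1/k}$ you invoke is only valid in $\Gamma_n$, not in $\Gamma_k$. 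Finally, the reduction of $\|F\|_\infty$ to $\|\varphi\|_\infty$ via ``elliptic estimates'' or Moser iteration on $\Delta_G F$ is circular, since the ellipticity constants of $G^{i\bar j}$ depend on second derivatives of $\varphi$, which are not controlled by $\|\varphi\|_{C^0}$.

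The paper's argument uses the coupling from the outset. One solves the auxiliary Monge--Amp\`ere equation $\omega_\psi^n = A_F^{-1} e^{\frac{n}{k}F}\Phi(F)\,\omega^n$ with $\Phi(F)=\sqrt{F^2+1}$, and then proves the pointwise inequality $F+\varepsilon\psi-\lambda\varphi\le C$ by an ABP maximum principle applied to $\Delta_G$ of (a regularization of) $(F+\varepsilon\psi-\lambda\varphi)_+^{\,q}$. The computation uses the second equation $\Delta_G F = -\bar\alpha+\tr_G\alpha$, together with the key algebraic input $\det G^{i\bar j}\ge c\,\sigma_k^{-n/k}$ (from G\aa rding), which makes $\tr_G\omega_\psi \ge c\,(\det G\cdot\det\omega_\psi)^{1/n}\ge c\,\Phi(F)^{1/n}$; this extra $\Phi(F)^{1/n}$ is exactly what forces the ABP integral to localize on a set where $F$ is already bounded. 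From $F+\varepsilon\psi-\lambda\varphi\le C$ one gets $e^{pF}\le C e^{-p\varepsilon\psi}$, and choosing $p\varepsilon$ below the $\alpha$-invariant gives $e^F\in L^p$ for some $p>n/k$; \emph{now} Dinew--Ko\l odziej bounds $\|\varphi\|_\infty$, Ko\l odziej bounds $\|\psi\|_\infty$, the same pointwise inequality gives $\sup F\le C$, and a direct maximum principle on $\Delta_G(F+A\varphi)$ (again using the $\det G$ lower bound) gives $\inf F\ge -C$. The missing idea in your proposal is precisely this ABP step that intertwines both equations before any $C^0$ bound is available.
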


Before we present the proof, let us remark that by a result from \cite{DK, GPT}, using just the first equation in \eqref{eqn:c sigma}, we can bound the $L^{\infty}$ norm of $\varphi$ if the $e^F$ is bounded in $L^p$ for $p>\frac n k$. However, this estimate fails when $p = \frac n k$. Our result can be seen as a refinement of their result for the coupled system. 

\subsection{Proof}
We begin with the following lemma which gives a lower bound on $\det G^{i\bar j}$.

\begin{lemma}\label{lemma 1}
There is a constant $C=C(n,k)$ such that 
$$\det (G^{i\bar j}) \ge C(n,k) \sigma_k^{-\frac n k}.$$
\end{lemma}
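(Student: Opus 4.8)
The plan is to diagonalize and reduce everything to an inequality among elementary symmetric polynomials. Fix a point and choose a unitary frame in which $\omega = \sqrt{-1}\sum dz^i \wedge d\bar z^i$ and $\omega_\varphi = \sqrt{-1}\sum \lambda_i\, dz^i \wedge d\bar z^i$ with $\lambda = (\lambda_1,\dots,\lambda_n) \in \Gamma_k$. In this frame, the coefficients of $G$ are diagonal as well. Expanding the definition $G^{i\bar j} = k\,\frac{\sqrt{-1}\,dz^i\wedge d\bar z^j \wedge \omega_\varphi^{k-1}\wedge\omega^{n-k}}{\omega_\varphi^k\wedge\omega^{n-k}}$, the numerator for $i=j$ picks out exactly the monomials of $\sigma_{k-1}$ not involving $\lambda_i$ and the denominator is proportional to $\sigma_k(\lambda)$, so one gets the clean formula $G^{i\bar i} = \frac{\sigma_{k-1,i}(\lambda)}{\sigma_k(\lambda)}$ (up to the combinatorial constant $k$ cancelling the $k$ in $\sigma_k$, which I would track carefully as it is exactly where a $C(n,k)$ enters). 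By Lemma~\ref{lem: Gamma_k-basics}(1), each $\sigma_{k-1,i}(\lambda) > 0$, so $G$ is positive definite and
\[
\det(G^{i\bar j}) = \frac{\prod_{i=1}^n \sigma_{k-1,i}(\lambda)}{\sigma_k(\lambda)^n}.
\]

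It then remains to bound $\prod_{i=1}^n \sigma_{k-1,i}(\lambda)$ from below by $C(n,k)\,\sigma_k(\lambda)^{n - \frac n k} = C(n,k)\,\sigma_k(\lambda)^{\frac{n(k-1)}{k}}$. I would obtain this from a Maclaurin/Newton-type inequality: for $\lambda\in\Gamma_k$ one has $\sigma_{k-1,i}(\lambda) \ge c(n,k)\,\sigma_k(\lambda)^{\frac{k-1}{k}}$ for every $i$, after which taking the product over the $n$ indices gives exactly the claim. The pointwise bound $\sigma_{k-1,i}(\lambda)\ge c(n,k)\sigma_k(\lambda)^{(k-1)/k}$ can itself be seen as a special case of the Gårding inequality in Lemma~\ref{lem: Gamma_k-basics}(2): indeed $\sum_j \mu_j \frac{\partial\sigma_k}{\partial\lambda_j}(\lambda) = \sum_j \mu_j\,\sigma_{k-1,j}(\lambda)$, and choosing $\mu = e_i$ (a standard basis vector — note $e_i$ need not lie in $\Gamma_k$, so one must instead use $\mu$ a small perturbation of $e_i$ into $\Gamma_k$, or invoke the more standard statement that $\sigma_{k}^{1/k}$ is concave and its gradient components are comparable) yields $\sigma_{k-1,i}(\lambda)\ge C(n,k)\,\sigma_k(\lambda)^{(k-1)/k}$ up to handling the boundary-of-cone case by continuity/homogeneity. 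Alternatively, this is a direct consequence of the classical Newton–Maclaurin inequalities applied to the variables $(\lambda_1,\dots,\widehat{\lambda_i},\dots,\lambda_n)$ together with $\sigma_k(\lambda) = \lambda_i\sigma_{k-1,i}(\lambda) + \sigma_{k,i}(\lambda)$ and $\sigma_{k,i}\le c\,\sigma_{k-1,i}^{k/(k-1)}$ on the relevant cone.

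The main obstacle I anticipate is not the algebra of the symmetric functions per se but making the constant genuinely depend only on $(n,k)$: the estimate $\sigma_{k-1,i}\ge c(n,k)\sigma_k^{(k-1)/k}$ is \emph{scale-invariant} (both sides are homogeneous of degree $k-1$ in $\lambda$), so by homogeneity it suffices to prove it on the slice $\{\sigma_k(\lambda)=1\}\cap\overline{\Gamma_k}$, and there one needs a uniform positive lower bound for $\sigma_{k-1,i}$ — this is where one must rule out degeneration of $\lambda$ toward the part of $\partial\Gamma_k$ where some $\sigma_{k-1,i}\to 0$. The cleanest route is to verify $\sigma_{k-1,i}>0$ holds on all of $\overline{\Gamma_k}\setminus\{0\}$ (or at least that $\sigma_{k-1,i}=0$ forces $\sigma_k=0$), which combined with compactness of the slice delivers the uniform bound; one then assembles the $n$ factors and reads off $C(n,k)$.
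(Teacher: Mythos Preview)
Your reduction is correct up through the formula
\[
\det(G^{i\bar j}) \;=\; \frac{\prod_{i=1}^n \sigma_{k-1,i}(\lambda)}{\sigma_k(\lambda)^n},
\]
and the task is indeed to show $\prod_i \sigma_{k-1,i}(\lambda) \ge C(n,k)\,\sigma_k(\lambda)^{n(k-1)/k}$. The gap is in the next step: the \emph{individual} inequality $\sigma_{k-1,i}(\lambda)\ge c(n,k)\,\sigma_k(\lambda)^{(k-1)/k}$ that you propose is false in general. For a concrete counterexample take $n=3$, $k=2$, and $\lambda=(t,1,1)$ with $t\to\infty$; then $\lambda\in\Gamma_2$, $\sigma_2(\lambda)=2t+1\to\infty$, but $\sigma_{1,1}(\lambda)=\lambda_2+\lambda_3=2$ stays bounded. (The same phenomenon occurs for any $1<k\le n$ by sending one eigenvalue to infinity.) Your compactness argument cannot rescue this: the slice $\{\sigma_k=1\}\cap\Gamma_k$ is \emph{not} compact, and along the rescaled family above one sees $\sigma_{k-1,i}\to 0$ on that slice. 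So neither the perturbed-$e_i$ G{\aa}rding argument nor the Newton--Maclaurin route can establish the individual bound, because the bound itself does not hold.

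What does hold is the bound on the \emph{product}, and the paper obtains it directly. Apply G{\aa}rding's inequality (Lemma~\ref{lem: Gamma_k-basics}(2)) with $\mu\in\Gamma_n\subset\Gamma_k$ constrained by $\prod_i\mu_i=1$. Since $\partial\sigma_k/\partial\lambda_i=\sigma_{k-1,i}(\lambda)>0$, the arithmetic--geometric mean inequality gives
\[
\inf_{\mu\in\Gamma_n,\ \prod\mu_i=1}\ \sum_i \mu_i\,\sigma_{k-1,i}(\lambda)
\;=\; n\Big(\prod_i \sigma_{k-1,i}(\lambda)\Big)^{1/n},
\]
while on the right-hand side Maclaurin's inequality yields $\sigma_k(\mu)^{1/k}\ge c(n,k)\,\sigma_n(\mu)^{1/n}=c(n,k)$. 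Combining these gives $\big(\prod_i\sigma_{k-1,i}(\lambda)\big)^{1/n}\ge C(n,k)\,\sigma_k(\lambda)^{(k-1)/k}$, which is exactly what is needed. The point is that the AM--GM step couples all $n$ factors at once, so the failure of the individual bounds is averaged out.
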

\begin{proof}We denote the eigenvalues of $\omega_\varphi$ (w.r.t. $\omega$) by $\lambda = (\lambda_1,\ldots, \lambda_n)\in \Gamma_k$. 
Then $G^{i\bar j} = \sigma_k(\lambda) ^{-1} \frac{\partial \sigma_k(\lambda)}{\partial \lambda_i} \delta_{ij}$. 
By Garding's inequality (Lemma~\ref{lem: Gamma_k-basics}) we know that if $\mu = (\mu_1,\ldots, \mu_n)\in \Gamma_k$, then 
$$\sum_{i=1}^n \mu_i \frac{\partial \sigma_k}{\partial \lambda_i} \ge C(n,k) \sigma_k(\mu)^{\frac 1 k} \sigma_k(\lambda)^{\frac{k - 1}{k}}.$$
Taking infimum over all $\mu\in \Gamma_n$ with $\prod_{i=1}^n \mu_i = 1$, we see the LHS of the above becomes $n (\prod_{i=1}^n \frac{\partial\sigma_k(\lambda)}{\partial \lambda_i} )^{1/n}$, and the RHS is bounded below by 
$$C(n,k) \sigma_n(\mu)^{\frac 1 n} \sigma_k(\lambda)^{\frac{k-1}{k}} = C(n,k) \sigma_k(\lambda)^{\frac{k-1}{k}}.$$
Combining these inequalities the lemma follows straightforwardly.

\end{proof}
As in \cite{CC}, we introduce an auxiliary complex Monge-Amp\`ere equation to prove the $C^0$ estimate. 
For notation convenience we denote $\Phi(F) = \sqrt{F^2 + 1}$ and $$A_F = \int_X e^{\frac n k F} \Phi(F) \omega^n. $$ $A_F$ is bounded by our assumption. We consider the complex MA equation
\begin{equation}\label{eqn:MA}
\left\{\begin{aligned}
& \omega_\psi^n = \frac{e^{\frac n k F} \Phi(F)}{A_F }\omega^n,\quad \sup_X\psi = 0\\
& \omega_\psi = \omega + \ddbar\psi >0,
\end{aligned}\right.
\end{equation}
which admits a unique solution by Yau's theorem \cite{Y}. The lemma below is the key step to obtain the $L^\infty$ estimate of $\varphi$, and the proof follows closely the ABP-type argument in \cite{CC} (see also \cite{GPT}).

\begin{lemma}\label{lemma 2}
For any $\varepsilon>0$, there exist constants $\lambda>0$ and $C = C(n, k,\varepsilon)>0$ such that
$$F + \varepsilon \psi - \lambda \varphi \le C.$$
\end{lemma}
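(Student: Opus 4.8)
The plan is to prove Lemma~\ref{lemma 2} by the Alexandrov--Bakelman--Pucci (ABP) method, following the strategy of Chen--Cheng \cite{CC} adapted to the Hessian setting. Set $u = F + \varepsilon\psi - \lambda\varphi$, where $\psi$ solves the auxiliary Monge--Amp\`ere equation \eqref{eqn:MA}. Since $u$ is smooth on the compact manifold $X$, it attains its maximum; normalizing, we may assume without loss of generality that $X$ is covered by finitely many coordinate charts and work in a chart containing a point near which $u$ is large. The heart of the ABP argument is to consider the \emph{contact set}
\[
\Gamma = \{x : u(y) \le u(x) + \langle \nabla u(x), y - x\rangle \text{ for all } y \text{ in the chart}\},
\]
i.e.\ the set where the graph of $u$ lies below its tangent plane, and to estimate from below the measure of the image $\nabla u(\Gamma)$ of the gradient map. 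On $\Gamma$ the Hessian $(\partial_i\partial_j u)$ (real Hessian) is non-positive, and the standard area formula gives $|\nabla u(\Gamma)| \le \int_\Gamma |\det D^2 u|\, dx$, which after the usual complexification manipulations is controlled by $\int_\Gamma (-\lapl_G u)^{2n}$ or by a product of the complex Hessian's eigenvalues.

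The key computation is to bound $-\lapl_G u$, or the relevant determinant, from above on $\Gamma$. Using the two equations in \eqref{eqn:c sigma}: applying $\lapl_G$ to $F$ gives $\lapl_G F = -\bar\alpha + \tr_G\alpha$, which is bounded by $C\tr_G\omega = C\sum_i G^{i\bar i}$ up to lower order; applying $\lapl_G$ to $\psi$ and using the concavity/Garding machinery together with \eqref{eqn:MA} gives a term of the form $-\varepsilon\, c_n (\det G^{i\bar j})^{-1/n}(\omega_\psi^n/\omega^n)^{1/n}$ plus $C\varepsilon\tr_G\omega$; and $\lapl_G\varphi = \tr_G\omega_\varphi - \tr_G\omega = k - \tr_G\omega$. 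The crucial point is to choose $\lambda$ large enough that the good term $-\lambda\lapl_G\varphi = -\lambda k + \lambda\tr_G\omega$ dominates and forces $\tr_G\omega$, hence all the bad terms, to be controlled; this is exactly where Lemma~\ref{lemma 1}, $\det G^{i\bar j}\ge C\sigma_k^{-n/k}$, and the relation $\sigma_k = e^F\cdot(\text{const})$ (from the first equation of \eqref{eqn:c sigma}, since $\omega_\varphi^k\wedge\omega^{n-k} = e^F\omega^n$ means $\sigma_k(\lambda) = c_{n,k}e^F$) enter: on the contact set one derives a pointwise inequality whose right-hand side involves $e^{\frac{n}{k}F}\Phi(F)/A_F$, which is precisely the density in \eqref{eqn:MA}. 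Integrating over $\nabla u(\Gamma)$ and using $\int_X \omega_\psi^n = \int_X\omega^n = V$ then bounds $\sup_X u - \sup_{\partial}u$ (or $\operatorname{osc}$), and since $\varphi, \psi$ are normalized by $\sup = 0$, one converts this to the stated one-sided bound $F + \varepsilon\psi - \lambda\varphi \le C$.

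In more detail on the contact-set estimate: at a point of $\Gamma$ one has $\sqrt{-1}\partial\bar\partial u \le 0$ in the complex sense as well, so $\lapl_G u \le 0$ there, giving
\[
\lambda(\tr_G\omega - k) \;\le\; \lapl_G F + \varepsilon\lapl_G\psi \;\le\; C(1+\varepsilon)\tr_G\omega \;-\; \varepsilon c_n\Big(\tfrac{\omega_\psi^n}{\omega^n}\Big)^{1/n}(\det G^{i\bar j})^{-1/n},
\]
so for $\lambda$ large (depending on $\varepsilon$) we get $\tr_G\omega \le C_\lambda$, and then feeding this back, $(\det G^{i\bar j})^{-1/n}(\omega_\psi^n/\omega^n)^{1/n} \le C_\lambda/\varepsilon$; combined with Lemma~\ref{lemma 1} this bounds $\sigma_k^{-1/k}(\omega_\psi^n/\omega^n)^{1/n}$, hence $(\det D^2 u)$ on $\Gamma$ by a multiple of $\omega_\psi^n/\omega^n = e^{\frac nk F}\Phi(F)/(A_F)\cdot(\omega^n/dx)$ up to harmless factors. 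The ABP inequality $\big(\tfrac{\sup_X u - C_0}{d}\big)^{2n} \le C\int_\Gamma \det D^2 u\,dx \le C A_F^{-1}\int_X e^{\frac nk F}\Phi(F)\,\omega^n = C$ then closes the estimate.

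The main obstacle, and the step requiring the most care, is the Hessian-specific algebra in bounding $\lapl_G\psi$ and $\det D^2 u$ on the contact set: unlike the cscK case where $G$ is the honest inverse metric of $\omega_\varphi$, here $G^{i\bar j} = \sigma_k^{-1}\partial_{\lambda_i}\sigma_k\,\delta_{ij}$ in eigencoordinates, so one must carefully use Garding's inequality (Lemma~\ref{lem: Gamma_k-basics}(2)) and the determinant bound (Lemma~\ref{lemma 1}) to relate $\lapl_G\psi = G^{i\bar i}\psi_{i\bar i}$ to $(\det G)^{-1/n}(\det(\omega+\partial\bar\partial\psi))^{1/n}$ via the arithmetic--geometric mean / Garding trick, and likewise to control the real Jacobian factor $\det D^2 u$ by the product of complex eigenvalues on $\Gamma$. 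Getting the dependence of $\lambda$ on $\varepsilon$ right, and ensuring all constants are of the form $C(n,k,\varepsilon)$ and in particular independent of the solution, is the delicate bookkeeping that the proof must handle.
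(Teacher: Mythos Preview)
Your proposal has the right skeleton: ABP, the auxiliary Monge--Amp\`ere equation \eqref{eqn:MA}, Lemma~\ref{lemma 1}, and the choice of $\lambda$ large so that the $\lambda\,\tr_G\omega$ term dominates. The pointwise lower bound
\[
\lapl_G u \;\ge\; (\lambda-\varepsilon-C_\alpha)\,\tr_G\omega - C_\lambda + \varepsilon\,\tr_G\omega_\psi \;\ge\; -C_\lambda + c(n,k)\,\varepsilon\,(\Phi(F)/A_F)^{1/n}
\]
(via AM--GM and Lemma~\ref{lemma 1}) is indeed the engine of the argument, and it does force $(\lapl_G u)_-$ to be supported where $|F|$ is bounded, so that the ABP integrand $(\lapl_G u)_-^{2n}/(\det G^{i\bar j})^2$ is globally integrable.

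The genuine gap is the localization on a compact manifold. Applying ABP to $u$ on a coordinate ball $B_r(x_0)$ containing the maximum yields only
\[
\sup_{B_r} u \;\le\; \sup_{\partial B_r} u + C,
\]
and nothing prevents $\sup_{\partial B_r} u$ from being essentially $\sup_X u$; there is no a~priori point at which $u = F + \varepsilon\psi - \lambda\varphi$ is known to be small (each of $F$ and $-\lambda\varphi$ can be large at any prescribed point). Your final displayed inequality $\big(\tfrac{\sup_X u - C_0}{d}\big)^{2n}\le C$ therefore has no meaning: you never identify $C_0$. The paper closes this gap with the Chen--Cheng device: one applies ABP not to $u$ but to $H\eta$, where $H = \phi_\delta(u)^q$ with $q = 1+\tfrac{1}{2n}$, $\phi_\delta$ is a smooth regularization of $2t_+$, and $\eta$ is a cutoff equal to $1$ near $x_0$ and $1-\theta$ near $\partial B_r$, with $\theta\sim r^2 M^{-1/q}$ tied to $M=\max H$. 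Then the boundary term is at most $(1-\theta)M$ and can be absorbed, giving $\theta M\le C$, hence $M^{1-1/q}\le C$. The exponent $q>1$ is also what allows the gradient cross term $G^{i\bar j}\nabla_{\bar j}H\,\nabla_i\eta$ to be absorbed by the good $(q-1)\phi^{q-2}|\nabla\phi|_G^2$ term in $\lapl_G H$. Without this cutoff-and-power construction the argument does not close.

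Two smaller points. First, the form of ABP you invoke, $\int_\Gamma|\det D^2 u|\,dx$, is not what is actually controlled; the relevant quantity is $\int (\lapl_G v)_-^{2n}/(\det G^{i\bar j})^2$, and your sentence ``hence $(\det D^2 u)$ on $\Gamma$ by a multiple of $\omega_\psi^n/\omega^n$'' is neither justified nor needed. Second, your displayed contact-set inequality has a sign slip: from $\lapl_G u\le 0$ one gets $\lapl_G F + \varepsilon\lapl_G\psi \le \lambda(k-\tr_G\omega)$, not $\lambda(\tr_G\omega-k)\le \lapl_G F + \varepsilon\lapl_G\psi$; the bound $\tr_G\omega\le C_\lambda$ then follows by combining this with the \emph{lower} bound $\lapl_G F + \varepsilon\lapl_G\psi \ge -(C_\alpha+\varepsilon)\tr_G\omega - \bar\alpha + \varepsilon\tr_G\omega_\psi$.
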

\begin{proof}
For notation convenience we denote $\phi(t) = \phi_\delta(t) = t + \sqrt{t^2 + \delta}>0$ which converges to $\max(t, 0)$ as $\delta\to 0$. Here $\delta>0$ is a small number which will go to zero. We also denote $f : = F+\varepsilon \psi - \lambda \varphi$ for notation simplicity and we will look at $\phi(f)$ which converges to $f_+$ as $\delta\to 0$ and is a regularization of $2 f_+$.

\medskip

We define a {\em smooth} function 
$$H = \phi(f)^q,$$
where  $q = 1 + \frac 1{2n}>1$ is constant. Since $X$ is compact, we may assume $H$ achieves its maximum at $x_0 $ and $\max_X H = M>1$. Let $r = r(X,\omega)>0$ be the injectivity radius of $(X,\omega)$ as a Riemannian manifold. So we can identify the geodesic ball $B_r(x_0)$ as the Euclidean ball $B_{\mathbb C^n}(0,r)$ for simplicity. Let $\theta \in (0,1)$ be 
\begin{equation}\label{eqn:theta}\theta : = \min\{ \frac{r^2}{1000 M^{1/p}}, \frac{1}{100n} \}<\frac 1{10}.\end{equation}
Choose an auxiliary smooth function $\eta$ defined on $B_r(x_0)$ so that $\eta = 1$ on $B_{r/4}(x_0)$ and $ \eta = 1 -\theta$ on $B_{3r/4}(x_0)$, and $\eta$ also satisfies 
$$|\nabla \eta|_g^2 \le \frac{10 \theta^2}{r^2},\quad |\nabla^2 \eta|_g \le \frac{10\theta}{r^2},$$
where we identify $\omega$ with its associated Riemannian metric $g$.

\newcommand{\fpp}{F+\varepsilon \psi - \lambda \varphi}

We calculate
\begin{equation}\label{eqn:1}\Delta_G( H \eta ) = \eta \Delta_G H  + H \Delta_G \eta + 2 Re\big( G^{i\bar j} \nabla_{\bar j} H \nabla_i \eta   \big).\end{equation}
Observe that (below for a function $f$, $|\nabla f|_G^2 = G^{i\bar j} \nabla_{\bar j} f\nabla_i f$ )
$$H \Delta_G \eta = H \tr_G \ddbar \eta\ge - H \frac{10\theta}{r^2} \tr_G \omega.$$
And
\begin{align*}
Re\big( G^{i\bar j} \nabla_{\bar j} H \nabla_i \eta   \big) & = q \phi(f)^{q-1} Re \big( G^{i\bar j}\nabla_{\bar j} \phi(f) \nabla_i \eta   \big)\\
& \ge -\frac{q (q-1)}{4} \phi(f)^{q-2}| \nabla \phi(f)  |^2_G  - \frac{q}{q-1}\phi(f)^q  |\nabla \eta|^2 _G\\
& \ge -\frac{q (q-1)}{4} \phi(f)^{q-2}| \nabla \phi(f)  |^2_G - \frac{q}{q-1} \phi(f)^q  \frac{10\theta^2}{r^2} \tr_G \omega.
\end{align*}
And
\begin{align*}
\eta \Delta_G H & = q\eta\phi(f)^{q-1} \Delta_G \phi(f) + q (q-1) \eta \phi(f)^{q-2} | \nabla \phi(f)|_G^2\\
 & = q\eta\phi(f)^{q-1}  \phi'(f) \Delta_G f + q \eta \phi(f)^{q-1} \phi''(f) |\nabla f |^2_G    + q (q-1) \eta \phi(f)^{q-2} | \nabla \phi(f)|_G^2.
\end{align*}
We note that the middle term above is nonnegative due to the fact that $$\phi''(t) = \frac{1}{\sqrt{t^2 + \delta}} - \frac{t^2}{(\sqrt{t^2 + \delta})^3}>0 $$
For the first term we calculate
\begin{align*}
\Delta_G f = \Delta_G (\fpp)& = \tr_G \alpha - \overline{\alpha} + \varepsilon \tr_G \omega_\psi - \varepsilon \tr_G \omega - \lambda k + \lambda \tr_G \omega\\
& \ge (\lambda -\varepsilon) \tr_G \omega + \tr_G \alpha - C(n,k,\alpha) + \varepsilon n (\det G \cdot \det \omega_\psi)^{1/n}\\
& \ge (\lambda -\varepsilon) \tr_G \omega + \tr_G \alpha - C(n,k,\alpha) + \varepsilon n (c e^{-\frac n k F} \cdot e^{\frac n k F}\Phi(F) A_F^{-1})^{1/n}\\
& \ge (\lambda -\varepsilon) \tr_G \omega + \tr_G \alpha - C(n,k,\alpha) + c(n,k)\varepsilon \Phi(F)^{1/n} A_F^{-1/n},
\end{align*}
where in the second inequality we use Lemma \ref{lemma 1}.
Plugging these inequalities into \eqref{eqn:1}, if we choose $\lambda = 10 + \sup_X |\alpha|_\omega$
\begin{equation}\label{eqn:text}
\begin{split}
\Delta_G( H \eta ) & \ge - \frac {10 \theta}{r^2} \phi(f)^q  \tr_G \omega - \frac{q}{q-1} \phi(f)^q \frac{10\theta^2}{r^2} \tr_G\omega \\
&\qquad  + q \phi(f)^{q-1} \phi'(f) \Big( \tr_G \big( (\lambda - \varepsilon )\omega + \alpha   \big) + c(n,k) \varepsilon\Phi(F)^{1/n} A_F^{-1/n} - C   \Big) \\
& \ge  q \phi(f)^{q-1} \Big( 2 \phi'(f) \tr_G \omega - \frac{10\theta}{r^2 q}\phi(f) \tr_G \omega \\
&\qquad  - \frac{10\theta^2}{(q-1) r^2} \phi(f) \tr_G \omega + c(n,k) \phi'(f) \varepsilon\Phi(F)^{1/n} A_F^{-1/n} - C \phi'(f)   \Big).%\\
%& \ge  q (\fpp)^{q-1}_+ \Big(  c(n,k) \varepsilon \Phi(F)^{1/n} A_F^{-1/n} - C   \Big),
\end{split}\end{equation}
To deal with RHS in the equation \eqref{eqn:text}, note that on the set $\{f\le 0\}$ $$\phi(f) = f + \sqrt{f^2 + \delta} = \frac{\delta}{\sqrt{f^2 + \delta} - f}\le \sqrt{\delta},$$
and $$1\ge \phi'(f) = 1 + \frac{f}{\sqrt{f^2 + \delta}} = \frac{\phi(f)}{\sqrt{f^2 + \delta}}\ge 0.$$
So the in the set $\{f\le 0\}$ RHS of \eqref{eqn:text} is
\begin{align*}
\ge q \phi(f)^{q-1} \Big( - \frac{10\theta}{r^2 q}\sqrt{\delta} \tr_G \omega  - \frac{10\theta^2}{(q-1) r^2}\sqrt{\delta} \tr_G \omega - C   \Big)
\end{align*}
On the other, on the set $\{f>0\}$, we know $\phi'(f)>1$, so the RHS of \eqref{eqn:text} is 
$$\ge q \phi(f)^{q-1} \big( c(n, k) \varepsilon \Phi(f)^{1/n} A_F^{-1/n} - C   \big),$$
where in the last inequality we use the choice of $\theta$ in \eqref{eqn:theta} and the fact that $\phi(f)\le M^{1/q}$.

Combining the above two cases, we obtain that 
\begin{align*}
\Delta_G(H\eta) & \ge   q \phi(f)^{q-1} \big( c(n, k) \varepsilon \Phi(f)^{1/n} A_F^{-1/n} - C   \big) \chi_{\{f>0\}} \\
&\qquad - q \phi(f)^{q-1} \Big(  \frac{10\theta}{r^2 q}\sqrt{\delta} \tr_G \omega  +\frac{10\theta^2}{(q-1) r^2}\sqrt{\delta} \tr_G \omega + C   \Big)\chi_{\{f\le 0\}},
\end{align*}
where $\chi_E$ denotes the characteristic function of a given set $E$.

We now apply the ABP maximum principle (\cite{GT}) on the (Euclidean) ball $B_r(x_0)$, and we get
{\small
\begin{align*}
\sup_{B_r(x_0)} (H\eta) & \le \sup_{\partial B_r(x_0)} (H \eta) + C(n) r\Big\{ \int_{B_r(x_0)\cap \{f>0\}} \frac{\big( \phi(f)^{q-1}  \big)^{2n}  \big( c(n,k) \varepsilon \Phi(F)^{1/n} A_F^{-1/n} - C    \big)_-^{2n} }{ (\det G)^2  }    \\
&\qquad + \int_{ B_r(x_0)\cap \{f\le 0\}   }  \frac{ (\phi(f)^{q-1})^{2n}  (  \frac{10\theta}{r^2 q}\sqrt{\delta} \tr_G \omega  +\frac{10\theta^2}{(q-1) r^2}\sqrt{\delta} \tr_G \omega + C   )^{2n}      }{(\det G)^2}         \Big\}^{1/2n}\\
& \le \sup_{\partial B_r(x_0)} (H \eta) + C(n) r\Big\{ \int_{B_r(x_0)\cap \{f>0\}} \frac{\big( \phi(f)^{q-1}  \big)^{2n}  \big( c(n,k) \varepsilon \Phi(F)^{1/n} A_F^{-1/n} - C    \big)_-^{2n} }{ e^{- 2 n F/k}   }    \\
&\qquad +C(n, F, G,\omega) \delta^{n(q-1)}        \Big\}^{1/2n}
\end{align*}
}
where the constant $C(n, F, G, \omega)$ is not uniformly bounded, but this is not a concern, since later on we will let $\delta\to 0$.
We observe that  the integral above is in fact integrated over the set where $ c(n,k)\varepsilon \Phi(F)^{1/n} A_F^{-1/n} - C <0$ and $\fpp >0$, and over this set $F\le C(n, k, A_F, \varepsilon)$ by the definition of $\Phi(F) = \sqrt{F^2 + 1}$. On the other hand, on this set $\fpp \le C  - \lambda \varphi$. Therefore, we have
\begin{align*}M = \sup_{B_r(x_0)} (H\eta) &  \le (1-\theta) \sup_{\partial B_r(x_0)} H + C\Big(\int_{B_r(x_0)}  ( C -\lambda \varphi) \omega^n + C(n, F, G,\omega) \delta^{n(q-1)}   \Big)^{1/2n}\\ & \le (1-\theta) M + C+C(n, F, G,\omega)  \delta^{(q-1)/2}
 \end{align*}
where we have use the inequality that $\int_X (-\varphi)\omega^n \le C$ which follows by the Green formula, $n + \Delta_\omega \varphi >0$ and the normalization condition $\sup_X \varphi = 0$.
Hence we conclude that
$$M^{1-\frac 1 q}\le C + C(n, F, G,\omega)  \delta^{(q-1)/2}
, \,\Rightarrow \, M \le C + C(n, F, G, \omega) \delta^{2 q},$$ which says that 
$$\sup_X  2 f_+ \le \sup_X \phi(f) \le C + C(n, F, G, \omega) \delta^{2 q}$$
letting $\delta\to 0$ gives the desired estimate.

\end{proof}

\begin{proof}[Proof of Theorem \ref{thm:1}]
\newcommand{\fpp}{F+\varepsilon \psi - \lambda \varphi}

Lemma \ref{lemma 2} shows that for any $p> n/k$, we have
$$e^{p F} = e^{p(\fpp) - p \varepsilon \psi + p \lambda \varphi}\le C e^{-p\varepsilon \psi},$$ if $\varepsilon$ is chosen small enough so that $p \varepsilon < \alpha(X,\omega)$, the $\alpha$-invariant of the Kahler manifold $(X,\omega)$. We can get  the $L^\infty$ bound of $\varphi$ by a result of Dinew-Kolodziej (\cite{DK}, see also \cite{GPT,GPT1}).  Kolodziej's $L^\infty$-estimate for complex MA equations implies the $L^\infty$-bound of $\psi$. Plugging these estimates to Lemma \ref{lemma 2} again, we get the upper bound of $F$, i.e. $F\le C$. 

To see the lower bound of $F$, we calculate (if we take $A = 1 + \sup |\alpha|_\omega$)
\begin{align*}
\Delta_G ( F  + A \varphi  ) & = \tr_G \alpha - \overline{\alpha} + A \tr_G \tilde\omega - A \tr_G \omega\\
& \le - \tr_G \omega + Ak - \overline{\alpha}\\
& \le - n ( \det G \cdot \det \omega )^{1/n} + A k - \overline\alpha\\
& \le - c(n,k) e^{-\frac{F}{k}} + Ak - \overline\alpha.
\end{align*}
At the minimum of $F+A\varphi$, we have $F>-C(n,k,\omega, \overline\alpha)$. Then the lower bound of $F$ follows easily from the $L^\infty$ estimate of $\varphi$.

\end{proof}

\smallskip

\noindent{\bf Acknowledgement:} 
We would like to thank Prof. D.H. Phong for many helpful suggestions, and for his continuous and generous support and encouragement. F.T. would like to thank Harvard CMSA for supporting his work.

\noindent Bin Guo\\
\noindent Department of Mathematics \& Computer Science, Rutgers University, Newark, NJ 07102\\
bguo@rutgers.edu

\medskip

\noindent Kevin Smith\\
\noindent Department of Mathematics, Columbia University, New York, NY 10027\\
kjs@math.columbia.edu

\medskip

\noindent Freid Tong\\
\noindent Center of Mathematical Sciences and Applications, Harvard University, Cambridge, MA 02138
\\
ftong@cmsa.fas.harvard.edu

\end{document}